\newtheorem{theorem}{Theorem}[section]
\newtheorem{lemma}[theorem]{Lemma}
\newtheorem{proposition}[theorem]{Proposition}
\newtheorem{fact}[theorem]{Fact}
\newtheorem*{fact*}{Fact}
\theoremstyle{definition}
\newtheorem{example}[theorem]{Example}
\newtheorem{definition}[theorem]{Definition}
\newtheorem{remark}[theorem]{Remark}
\newtheorem{question}[theorem]{Question}
\newcommand{\Z}{{\mathbb Z}}
\newcommand{\N}{{\mathbb N}}
\renewcommand{\S}{{\mathbb{S}}}
\def\kal#1{\mathcal{K}_{#1}}   
\def\hull#1{\langle{#1}\rangle}
\title{Topologically independent sets in precompact groups}
\author[J. Sp\v{e}v\'ak]{Jan Sp\v{e}v\'ak}
\address[Jan Sp\v{e}v\'ak]{Department of Mathematics\\ Faculty of Science\\ J. E. Purkyne
University, \v{C}esk\'{e} ml\'{a}de\v{z}e 8,
400 96 \'{U}st\'{i} nad Labem\\
Czech Republic} \email{jan.spevak@ujep.cz}
\keywords{Topological group, precompact group, infinite direct sum, topologically independent set, absolutely Cauchy summable set}
\subjclass[2010]{Primary 22C05; Secondary 20K25}
\begin{document}

\begin{abstract}
It is a simple fact that a subgroup generated by a subset $A$ of an abelian group is the direct sum of the cyclic groups $\hull{a}$, $a\in A$ if and only if the set $A$ is independent. In \cite{DSS} the concept of an {\em independent} set in an abelian group was generalized to a {\em topologically independent set} in a topological abelian group (these two notions coincide in discrete abelian groups). It was proved that
a topological subgroup generated by a subset $A$ of an abelian topological group is the Tychonoff direct sum of the cyclic topological groups $\hull{a}$, $a\in A$ if and only if the set $A$ is topologically independent and absolutely Cauchy summable.
Further, it was shown, that the assumption of absolute Cauchy summability of $A$ can not be removed in general in this result. In our paper we show that it can be removed in precompact groups. 

In other words, we prove that if $A$ is a subset of a {\em precompact} abelian group, then the topological subgroup generated by $A$ is the Tychonoff direct sum of the topological cyclic subgroups $\hull{a}$, $a\in A$ if and only if $A$ is topologically independent. We show that precompactness can not be replaced by local compactness in this result.
\end{abstract}

\maketitle

{\em All groups in this paper are assumed to be abelian and all topological groups are assumed to be Hausdorff.} A~topological group is {\em precompact} if it is a topological subgroup of a compact group. As usually, the symbols $\N$ and $\Z$ stay for the sets of natural numbers and integers respectively.

Given an abelian group $G$, by $0_G$ we denote the zero element of $G$, and the subscript is omitted when there is no danger of confusion. Given a subset $A$ of $G$, the symbol $\hull{A}$ stays for the subgroup of $G$ generated by $A$. For ${a}\in G$, we use the symbol $\hull{a}$  to denote $\hull{\{a\}}$. Following \cite{DSS}, the symbol $S_A$ stays for the direct sum 
$$S_A=\bigoplus _{a\in A} \hull{a},$$
and by $\kal{A}$ we denote the unique group homomorphism 
$$\kal{A}: S_A\to G$$
which extends each natural inclusion map $\hull{a}\to G$ for $a\in A$. As in \cite{DSS}, we call the map $\kal{A}$ the {\em Kalton map associated with $A$\/}.

 We say that $\hull{A}$ {\em is the direct sum of cyclic groups $\hull{a}$, $a\in A$} provided that the Kalton map $\kal{A}$ is an isomorphic embedding. When $G$ is a topological group, we always consider $\hull{a}$  with the subgroup topology inherited from $G$ and $S_A$ with the subgroup topology inherited from the Tychonoff product $\prod_{a\in A}\hull{a}$. Finally, we say that  $\hull{A}$ {\em is a Tychonoff direct sum of cyclic groups $\hull{a}$, $a\in A$} if the Kalton map $\kal{A}$ is at the same time an isomorphic embedding and a homeomorphic embedding.

\section{Introduction}
The concept of compactness allows to transfer some purely non-topological issues into the realm of topology. A nice example of this phenomenon is the paper of Nagao and Shakhmatov  (see \cite{NS}), where the classical, purely combinatorial result of Landau on the existence of kings in finite tournaments, where finite tournament means a finite directed complete graph, is generalized by means of continuous weak selections to continuous tournaments for which the set of players is a compact Hausdorff space. In our paper we provide another example of this phenomenon which non-trivially transfers a result from the area of abelian groups to the realm of precompact abelian groups.

Recall that
a subset $A$ of nonzero elements of a group $G$ is {\em independent} provided that for every finite set $B\subset A$ and every family $(z_a)_{a\in B}$ of integers the equality $\sum_{a\in B}z_aa=0$ implies $z_aa=0$ for all $a\in B$. 

Similarly, a subset $A$ of nonzero elements of a topological group $G$ is {\em topologically independent\/} (see \cite[Definition 4.1]{DSS}) provided that for every neighborhood $W$ of $0_G$ there exists neighborhood $U$ of $0_G$ such that for every finite set $B\subset A$ and every  family $(z_a)_{a\in B}$ of integers the inclusion $\sum_{a\in B}z_aa\in U$ implies $z_aa\in W$ for all $a\in B$. This neighborhood $U$ is called a {\em $W$-witness\/} of the topological independence of $A$.

One can readily verify that in (Hausdorff) topological groups every topologically independent set is independent (see \cite[Lemma 4.2]{DSS}) and that these two notions coincide in discrete groups. Thus topological independence can be viewed as a natural generalization of independence.

 Let us recall a basic and simple fact about independent sets.
\begin{fact*}
A set $A$ of nonzero elements of a group is independent if and only if the subgroup generated by $A$ is a direct sum of the cyclic groups $\hull{a}$, $a\in A$.
\end{fact*}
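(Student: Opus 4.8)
The plan is to translate both sides of the equivalence into statements about the kernel of the Kalton map $\kal{A}$ and to observe that they literally coincide. By definition, $\hull{A}$ is the direct sum of the cyclic groups $\hull{a}$, $a\in A$, exactly when $\kal{A}$ is an isomorphic embedding, that is, an injective homomorphism onto its image. Since the image of $\kal{A}$ is by construction the subgroup generated by $\bigcup_{a\in A}\hull{a}$, which is precisely $\hull{A}$, surjectivity onto $\hull{A}$ is automatic; hence $\kal{A}$ is an isomorphic embedding if and only if $\ker\kal{A}=\{0\}$. So the whole statement reduces to proving that $A$ is independent if and only if $\kal{A}$ is injective.

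First I would describe the kernel explicitly. A typical element of $S_A=\bigoplus_{a\in A}\hull{a}$ is a finitely supported family $(h_a)_{a\in A}$ with $h_a\in\hull{a}$, and $\kal{A}$ sends it to the finite sum $\sum_{a} h_a\in G$. Writing each $h_a=z_aa$ for a suitable integer $z_a$ and letting $B$ be the (finite) support, the condition $\kal{A}((h_a)_a)=0$ becomes $\sum_{a\in B}z_aa=0$, while $(h_a)_a=0$ means $z_aa=0$ for every $a\in B$. This is exactly the implication appearing in the definition of independence.

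For the direction from independence to injectivity, I would take an arbitrary element of $\ker\kal{A}$, write it as $(z_aa)_{a\in B}$ with $B$ finite, apply independence to conclude $z_aa=0$ for all $a\in B$, and hence that the element is zero. For the converse, given a finite $B\subset A$ and integers $(z_a)_{a\in B}$ with $\sum_{a\in B}z_aa=0$, I would form the family in $S_A$ whose $a$-th coordinate is $z_aa$ for $a\in B$ and $0$ otherwise; it lies in $\ker\kal{A}$, so injectivity forces each coordinate $z_aa$ to vanish, which is what independence requires.

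The argument is essentially a bookkeeping exercise, so there is no serious obstacle. The only point demanding a little care is that when $a$ has finite order the integer $z_a$ representing a given $h_a\in\hull{a}$ is not unique; one must therefore phrase the independence condition in terms of the group elements $z_aa$ (equivalently, of the coordinates $h_a$) rather than the integers $z_a$ themselves, so that the two formulations match without ambiguity.
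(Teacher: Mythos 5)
Your proof is correct and is essentially the canonical argument: the paper states this fact without any proof, and your reduction of the direct-sum condition to injectivity of the Kalton map $\kal{A}$ (surjectivity onto $\hull{A}$ being automatic), followed by the identification of $\ker\kal{A}=\{0\}$ with the independence condition, is exactly the bookkeeping implicit in the paper's own definitions. Your closing remark about torsion elements --- phrasing everything in terms of the group elements $z_aa$ rather than the non-unique integers $z_a$ --- is the right way to make the two formulations match.
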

The aim of this paper is to prove the following counterpart of the above fact. Its proof is postponed to the end of the next section.

\begin{theorem}\label{thm:main}
A set $A$ of nonzero elements of a {precompact} group is {topologically} independent if and only if the { topological} subgroup generated by $A$ is a Tychonoff direct sum of the cyclic topological groups $\hull{a}$, $a\in A$.
\end{theorem}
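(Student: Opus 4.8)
The plan is to deduce Theorem~\ref{thm:main} from the result of \cite{DSS} quoted in the introduction, namely that $\langle A\rangle$ is a Tychonoff direct sum of the groups $\hull{a}$ if and only if $A$ is topologically independent \emph{and} absolutely Cauchy summable. The only nontrivial content is then to show that, \emph{in a precompact group}, topological independence already forces absolute Cauchy summability; the reverse implication is immediate, since a Tychonoff direct sum is in particular topologically independent by \cite{DSS}, and this costs no precompactness.

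For the hard direction I would first reduce absolute Cauchy summability to a statement about characters. Recall from \cite{DSS} that $A$ is absolutely Cauchy summable when for every neighbourhood $U$ of $0_G$ there is a finite $F\subseteq A$ such that $\sum_{a\in B}z_aa\in U$ for every finite $B\subseteq A\setminus F$ and every family $(z_a)_{a\in B}$ of integers. Since $G$ is precompact it embeds as a topological subgroup of a compact group, so the continuous characters $\chi\colon G\to\mathbb{T}$ (with $\mathbb{T}=\R/\Z$) generate its topology; in particular every neighbourhood of $0_G$ contains a set $\{g\in G:\ |\chi_j(g)|<\eta,\ 1\le j\le n\}$ for finitely many characters $\chi_j$ and some $\eta>0$. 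Consequently absolute Cauchy summability of $A$ is equivalent to the condition that every continuous character of $G$ vanishes on all but finitely many elements of $A$: if each character has this property, then given a basic neighbourhood determined by $\chi_1,\dots,\chi_n$ one lets $F$ be the union of the finitely many exceptions, so that any $\sum_{a\in B}z_aa$ with $B\subseteq A\setminus F$ is annihilated by every $\chi_j$ and hence lies in the neighbourhood; the converse follows by applying the definition to single elements and all their integer multiples, since a nontrivial subgroup of $\mathbb{T}$ cannot be confined to a small arc around $0$.

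The core of the argument is thus the claim that, if $A$ is topologically independent, then each continuous character $\chi$ satisfies $\chi(a)=0$ for all but finitely many $a\in A$. Suppose not, so that $A'=\{a\in A:\ \chi(a)\neq 0\}$ is infinite, and put $W=\{g\in G:\ |\chi(g)|<1/4\}$. Let $U$ be a $W$-witness of topological independence, and choose characters $\chi_1,\dots,\chi_n$, which I may assume include $\chi$, together with $\eta>0$ so that $\{g:\ |\chi_j(g)|<\eta,\ 1\le j\le n\}\subseteq U$. For each $a\in A'$ the value $\chi(a)$ is a nonzero element of $\mathbb{T}$, so some integer multiple of it lies in $[1/4,3/4]$; fix $z_a\in\Z$ with $|\chi(z_aa)|\ge 1/4$, whence $z_aa\notin W$. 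Now map $A'$ into the compact space $\mathbb{T}^n$ by $a\mapsto(\chi_j(z_aa))_{j=1}^n$. As $A'$ is infinite, two distinct $a,a'\in A'$ have images within $\eta$ of each other, so $g=z_aa-z_{a'}a'$ satisfies $|\chi_j(g)|<\eta$ for every $j$ and therefore $g\in U$. But $g=\sum_{b\in B}z_bb$ over $B=\{a,a'\}$ lies in $U$ while its term $z_aa$ lies outside $W$, contradicting the choice of $U$ as a $W$-witness.

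I expect this last claim to be the main obstacle, and it is exactly where precompactness is indispensable, in two ways: it guarantees that the witness $U$ contains a basic neighbourhood cut out by finitely many characters, and it supplies the compactness of $\mathbb{T}^n$ that drives the pigeonhole step. Neither is available for a merely locally compact group, whose topology is typically strictly finer than the one generated by its characters, which is consistent with the announced failure of the theorem when precompactness is weakened to local compactness.
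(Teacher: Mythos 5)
Your proposal is correct and takes essentially the same route as the paper: you reduce the theorem via \cite[Theorem 5.1]{DSS} to showing that topological independence implies absolute Cauchy summability in precompact groups, establish this by proving every continuous character is finitely $A$-supported (choosing integer multiples $z_aa$ whose character values stay away from the identity, then applying a pigeonhole argument in the compact torus), which is exactly the content of the paper's Propositions \ref{prop:finitely:supported:characters} and \ref{proposition:top:ind:is:acs}. The only cosmetic differences are that you argue by contradiction from a fixed witness $U$ rather than the paper's contrapositive over a local base, and you write the circle additively as $\R/\Z$ instead of multiplicatively.
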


Example \ref{ex:loc:comp:does:not:work} demonstrates, that precompactness can not be replaced by local compactness in Theorem~\ref{thm:main}.

In \cite{DSS} a result closely related to Theorem \ref{thm:main} was obtained. In order to state it, recall that by \cite[Definition 3.1]{DSS} a subset $A$ of a topological group is {\em absolutely Cauchy summable} provided that for every neighborhood $U$ of $0_G$ 
\begin{equation}\label{eq:def:of:cauch:summable}
\mbox{there exists finite $F\subset A$ such that }\hull{A\setminus F}\subset U.
\end{equation} 
Let us state the promised result (see \cite[Theorem 5.1]{DSS}).

\begin{fact}\label{fact:basic}
A subset $A$ of nonzero elements of a topological group $G$ is at the same time topologically independent and absolutely Cauchy summable if and only if the { topological} subgroup generated by $A$ is a Tychonoff direct sum of the cyclic topological groups $\hull{a}$, $a\in A$.
\end{fact}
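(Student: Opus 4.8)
The plan is to read the statement as a characterization of when the Kalton map $\kal{A}\colon S_A\to G$ is a homeomorphic isomorphic embedding onto $\hull{A}$, and to split the topological content into two independent halves: \emph{continuity} of $\kal{A}$, which I expect to be governed precisely by absolute Cauchy summability, and \emph{openness} of $\kal{A}$ onto its image, which I expect to be governed precisely by topological independence. The purely algebraic layer is already available: since topological independence implies independence, the unstarred Fact guarantees that $\kal{A}$ is an algebraic isomorphism onto $\hull{A}$ exactly when $A$ is independent. Thus in both directions I may take the algebraic statement for granted and concentrate on the two topological properties.

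For the forward implication, assume $A$ is topologically independent and absolutely Cauchy summable. To prove continuity of $\kal{A}$ at $0$, given a neighborhood $W$ of $0_G$ I would pick a symmetric $W'$ with $W'+W'\subset W$, use absolute Cauchy summability to fix a finite $F\subset A$ with $\hull{A\setminus F}\subset W'$, and use continuity of addition to choose neighborhoods $O_a$ of $0$ in $\hull{a}$ for $a\in F$ whose sum lies in $W'$. The basic neighborhood of $S_A$ cut out by the $O_a$ on $F$ (and the full $\hull{a}$ off $F$) then maps into $W$, because any element of $S_A$ has finite support, so its off-$F$ part lies in $\hull{A\setminus F}\subset W'$ automatically. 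For openness onto $\hull{A}$, I would start from a basic neighborhood $V$ of $0$ in $S_A$ determined by neighborhoods $\tilde U_a$ of $0_G$ on a finite set $F$, set $W_1=\bigcap_{a\in F}\tilde U_a$, and feed $W_1$ to topological independence; the resulting $W_1$-witness $U$ forces every $g=\sum_a z_a a\in U\cap\hull{A}$ to satisfy $z_a a\in W_1$ for all $a$, so its (unique) preimage lands in $V$, giving $U\cap\hull{A}\subset\kal{A}(V)$.

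For the reverse implication, assume $\kal{A}$ is a homeomorphic isomorphic embedding. Independence of $A$ follows from the algebraic embedding via the Fact. To extract absolute Cauchy summability, given a neighborhood $U$ of $0_G$ I would use continuity of $\kal{A}^{-1}$ to see that $\kal{A}^{-1}(U\cap\hull{A})$ is a neighborhood of $0$ in $S_A$, hence contains a basic neighborhood determined by some finite $F_0$; as every element supported in $A\setminus F_0$ lies in this basic neighborhood, $\hull{A\setminus F_0}=\kal{A}\bigl(\bigoplus_{a\in A\setminus F_0}\hull{a}\bigr)\subset U$. To recover topological independence, given a neighborhood $W$ I would first apply the absolute Cauchy summability just obtained to fix a finite $F$ with $\hull{A\setminus F}\subset W$; this settles every coordinate off $F$ for free, since then $z_a a\in\hull{A\setminus F}\subset W$ for $a\notin F$. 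The finitely many coordinates in $F$ are then controlled by openness: the image under $\kal{A}$ of the basic neighborhood with coordinates $W\cap\hull{a}$ on $F$ is a neighborhood $U\cap\hull{A}$ of $0$ in $\hull{A}$, and this $U$ serves as the required witness.

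I expect the main obstacle to be the correct handling of the tail, where the two halves of the argument genuinely separate. In a basic neighborhood of $S_A$ infinitely many coordinates are left completely unconstrained, so continuity of $\kal{A}$ can only hold if those unconstrained tails are uniformly small --- which is exactly absolute Cauchy summability --- whereas openness is a coordinatewise matter captured by topological independence. The delicate points are therefore the two tail estimates: deducing continuity of $\kal{A}$ from absolute Cauchy summability in the forward direction and, dually, extracting absolute Cauchy summability from continuity of $\kal{A}^{-1}$ in the reverse direction. In both, the decisive and easily overlooked fact is that every element of $S_A$ has finite support, which is what makes the off-$F$ contribution fall inside $\hull{A\setminus F}$.
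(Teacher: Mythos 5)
Your proof is correct, and it follows exactly the decomposition the paper attributes to \cite{DSS}: absolute Cauchy summability is equivalent to continuity of $\kal{A}$, while topological independence (together with the algebraic injectivity it implies) yields openness of $\kal{A}$ onto $\hull{A}$. The paper itself does not reprove the Fact but cites \cite[Theorem 5.1]{DSS} and sketches precisely this split for the ``only if'' direction, so your write-up essentially supplies the omitted details. Two slips deserve correction, though neither invalidates the argument. First, your framing claim that openness of $\kal{A}$ is ``governed precisely'' by topological independence is false: an open isomorphism $\kal{A}$ need not come from a topologically independent set, and the paper's Example \ref{ex:open:iso:not:enough} (answering Question \ref{Q1} negatively, even for precompact $G$) exhibits exactly this failure. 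Your actual reverse argument silently repairs the framing by first extracting absolute Cauchy summability from continuity and only then invoking openness to control the finitely many coordinates in $F$ --- this combined use of the two hypotheses is essential, not a convenience, so the two halves are not ``independent'' in the direction you suggested. Second, the step producing absolute Cauchy summability uses continuity of $\kal{A}$ itself (equivalently, openness of the inverse map), not ``continuity of $\kal{A}^{-1}$'' as you wrote: what you need is that the preimage $\kal{A}^{-1}(U\cap\hull{A})$ of a neighborhood is a neighborhood of $0$ in $S_A$. With these labels fixed, your tail analysis --- finite support of elements of $S_A$ placing the off-$F$ contribution inside the subgroup $\hull{A\setminus F}$ --- is exactly right in both directions.
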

In view of Fact \ref{fact:basic} we can see  that Theorem \ref{thm:main} reads as follows: {\em Every topologically independent set in a precompact group is absolutely Cauchy summable}. Let us note, that absolutely Cauchy summable sets can be far away from topologically independent sets even in the realm of compact groups. Indeed, by \cite[Remark 5.3]{DSS}, every null sequence in the compact metric group $\Z_p$ of $p$-adic integers is absolutely Cauchy summable while every topologically independent subset of $\Z_p$ is a singleton.

The ``only if'' part of Fact \ref{fact:basic} is based on two straightforward results. First one states that absolute Cauchy summability of a set $A$ is equivalent to the continuity of the Kalton map $\kal{A}$ (\cite[Theorem 3.5]{DSS}), while the second says that topological independence of a set $A$ implies that the Kalton map $\kal{A}:S_A\to\hull{A}$ is an open isomorphism (\cite[ Lemma 4.2 and Proposition 4.7 (i)]{DSS}). One may ask, whether the implication in the latter statement can be reversed:

\begin{question}\label{Q1}
Let $A$ be a subset of nonzero elements of a topological group $G$ such that the Kalton map $\kal{A}:S_A\to \hull{A}$ is an open isomorphism. Must $A$ be topologically independent?
\end{question}

 Example \ref{ex:open:iso:not:enough} provides a negative answer to this question even in the case, when $G$ is precompact.

\section{Precompact group topologies on direct sums}
By $\S$ we denote the unit circle in the complex plain  which is a compact group with respect to multiplication of complex numbers. 

Given a group $G$ we denote by $G'$ its character group (the group of all homomorphisms from $G$ to $\S$). Elements of $G'$ are called {\em characters}. If $G$ is a topological group, then the symbol $G^*$ stays for the subgroup of $G'$ consisting of all continuous characters.

Let $H$ be a group of characters on $G$. We denote by $T_H$ the coarsest group topology on $G$ making all characters of $H$ continuous. Since the collection $\{V_\delta:\delta\in(0;\pi)\}$, where  $$V_\delta=\{e^{it}:t\in(-\delta,\delta)\},$$ is a local base of the topology of $\S$ at the identity, the topology $T_H$ has a local base at $0_G$ consisting of all the sets of the form 
\def\U#1#2{\mathcal{U}(\chi_1,\ldots,\chi_#1;#2)}
$$\U{n}{\delta}=\{g\in G:\chi_i(g)\in V_\delta \mbox{ for all } i=1,\ldots,n\},$$
where $\delta\in(0;\pi)$, $n\in\N$ and $\chi_1,\ldots,\chi_n\in H$. The topology is Hausdorff whenever the characters of $H$ separate points of $G$.

Let us recall a basic fact about the topology $T_H$ for precompact groups (see \cite[Theorem 2.3.2]{DPS}).
\begin{fact}\label{fact:precompact:top}
Let $G$ be a topological group. Then the topology of $G$  is precompact if and only if it is equal to $T_{G^*}$ and the characters of $G^*$ separate points of $G$.
\end{fact}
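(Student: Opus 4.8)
The plan is to prove the two implications separately, taking as the single substantial external input the structure theory of compact abelian groups: \emph{the topology of a compact group $K$ coincides with $T_{K^*}$, and the continuous characters in $K^*$ separate the points of $K$} (the Peter--Weyl theorem in the compact abelian case). I would first record the one observation that is uniform across both directions: for \emph{any} topological group $G$ the topology $T_{G^*}$ is automatically coarser than the given topology, since by the very definition of $G^*$ every character in $G^*$ is continuous. Hence in the asserted equality of topologies the only nontrivial content is always the reverse inclusion.

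For the ``if'' direction I would assume that the topology of $G$ equals $T_{G^*}$ and that $G^*$ separates points, and consider the evaluation homomorphism
$$e\colon G\to \S^{G^*},\qquad e(g)=(\chi(g))_{\chi\in G^*}.$$
Separation of points makes $e$ injective, and because $T_{G^*}$ is by construction the initial topology induced on $G$ by the family $(\chi)_{\chi\in G^*}$ of maps into copies of $\S$, the map $e$ is a homeomorphism onto its image; being also a homomorphism, it is a topological isomorphism of $G$ onto $e(G)$. Now $\S^{G^*}$ is compact by Tychonoff's theorem, so the closure $\overline{e(G)}$ is a closed subgroup of a compact group and hence compact. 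Thus $e$ exhibits $G$ as a topological subgroup of the compact group $\overline{e(G)}$, which is precisely precompactness. Note this direction uses only compactness of $\S$ together with Tychonoff's theorem, not Peter--Weyl.

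For the ``only if'' direction I would assume $G$ is precompact, say $G$ is a topological subgroup of a compact group $K$. By the Peter--Weyl input, $K^*$ separates the points of $K$ and the topology of $K$ equals $T_{K^*}$. Restricting each $\chi\in K^*$ to $G$ gives a continuous character $\chi|_G\in G^*$, and the family $\{\chi|_G:\chi\in K^*\}$ still separates the points of $G$, so $G^*$ separates points. For the topologies, the key step is transitivity of initial topologies: since the topology of $K$ is initial with respect to $\{\chi:\chi\in K^*\}$, the subspace topology that $G$ inherits from $K$ is the initial topology with respect to the restrictions $\{\chi|_G:\chi\in K^*\}$, i.e.\ it equals $T_{\{\chi|_G\,:\,\chi\in K^*\}}$. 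Because $\{\chi|_G:\chi\in K^*\}\subseteq G^*$, this topology is coarser than $T_{G^*}$; combined with the uniform inclusion noted above, the two coincide, so the topology of $G$ equals $T_{G^*}$.

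I expect the main obstacle to be the external ingredient rather than the assembly: the whole weight of the statement sits in the ``only if'' direction, and specifically in the fact that a compact abelian group carries enough continuous characters both to separate its points and to generate its topology (Peter--Weyl, or equivalently Pontryagin--van Kampen duality). The remaining steps---the evaluation embedding, taking a closure inside the Tychonoff power $\S^{G^*}$, and the elementary comparison of initial topologies associated with a smaller versus a larger family of characters---require only standard facts about initial topologies and Tychonoff's theorem. Since the statement is quoted from \cite[Theorem 2.3.2]{DPS}, in an actual write-up I would simply cite the compact Peter--Weyl theorem for the character theory and then glue together the two embeddings as above, rather than reproving the duality from scratch.
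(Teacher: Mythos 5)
Your proof is correct. The paper gives no proof of this Fact---it is quoted verbatim from \cite[Theorem 2.3.2]{DPS}---and your argument (the evaluation embedding $e\colon G\to\S^{G^*}$ followed by taking the closure in the Tychonoff power for the ``if'' direction, and Peter--Weyl for the compact supergroup plus transitivity of initial topologies under restriction for the ``only if'' direction) is exactly the standard proof of the cited theorem, so there is nothing to fill in or repair.
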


\begin{definition}
Given a subset $A$ of a group $G$ and a character $\chi\in G'$ we define the {\em $A$-support} of $\chi$ by the formula $$supp_A(\chi)=\{a\in A: \chi(a)\neq 1\}.$$
We say that the character $\chi$ is {\em finitely $A$-supported} provided that $supp_A(\chi)$ is finite.
\end{definition}

\begin{lemma}\label{lemma:basic}
Let $A$ be an infinite subset of a compact group $G$, and  $V$  an open neighborhood of the identity element of $G$. Then there are distinct $a,b\in A$ such that $ab^{-1}\in V$.
\end{lemma}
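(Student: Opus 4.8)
The plan is to run a pigeonhole argument powered by the compactness of $G$. The key preliminary step is to replace $V$ by a well-behaved neighborhood: using the continuity of the multiplication and inversion maps at the identity $e$, I would first produce a symmetric open neighborhood $W$ of $e$ (that is, one with $W=W^{-1}$) satisfying $W\cdot W\subseteq V$. This is the standard fact that every neighborhood of the identity in a topological group contains a symmetric neighborhood whose square lands back inside the original neighborhood.

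Next I would exploit compactness directly. The family $\{gW:g\in G\}$ is an open cover of $G$, so by compactness there are finitely many elements $g_1,\ldots,g_n\in G$ with $G=\bigcup_{i=1}^n g_iW$. In particular these finitely many translates already cover the infinite set $A$.

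Now comes the pigeonhole step. Since $A$ is infinite and is covered by the $n$ sets $g_iW$, at least one of them, say $g_jW$, must contain two distinct points $a,b\in A$. Writing $a=g_jw_1$ and $b=g_jw_2$ with $w_1,w_2\in W$, and using that $G$ is abelian, I obtain
\[
ab^{-1}=g_jw_1(g_jw_2)^{-1}=w_1w_2^{-1}\in W\cdot W^{-1}=W\cdot W\subseteq V,
\]
where the second-to-last equality uses the symmetry of $W$. This produces the desired distinct pair $a,b\in A$ with $ab^{-1}\in V$.

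I do not anticipate a genuine obstacle here: the argument is a clean combination of compactness and pigeonhole, and the only point demanding the slightest care is the initial choice of the symmetric ``square-root'' neighborhood $W$, which is entirely routine. The essential role of compactness is precisely to reduce the infinite set $A$ to finitely many cells, thereby forcing a collision between two of its points.
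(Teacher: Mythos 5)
Your proof is correct, but it invokes compactness through a different mechanism than the paper. The paper's proof also starts with a ``square-root'' neighborhood (an open $U$ with $U^{-1}U\subset V$), but then argues by contradiction: if the translates $Ua$, $a\in A$, were pairwise disjoint, then $A$ would be an infinite closed discrete subset of the compact group $G$, which is impossible; hence $Ua\cap Ub\neq\emptyset$ for some distinct $a,b\in A$, and any point of the intersection yields $ab^{-1}\in U^{-1}U\subset V$. So the paper uses compactness in the guise of limit-point compactness (a compact space has no infinite closed discrete subspace), and it leaves to the reader the small verification that pairwise disjointness of the translates really does make $A$ closed and discrete. You instead use the open-cover definition directly: finitely many translates $g_iW$ cover $G$, hence cover $A$, and the pigeonhole principle forces two distinct points of $A$ into a single translate. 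Your route is fully self-contained and arguably more elementary, at the cost of a couple of extra lines; the paper's route is terser but rests on a standard fact it does not reprove. One cosmetic remark: your appeal to commutativity when collapsing $g_jw_1w_2^{-1}g_j^{-1}$ to $w_1w_2^{-1}$ is legitimate here, since the paper assumes all groups are abelian, but it can be avoided entirely by covering $G$ with right translates $Wg$ instead, for then $a=w_1g_j$ and $b=w_2g_j$ give $ab^{-1}=w_1w_2^{-1}\in WW^{-1}=WW\subseteq V$ with no commuting needed, which makes the lemma and your proof valid in arbitrary compact groups.
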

\begin{proof}
Let $U$ be an open neighborhood of the identity element of $G$ such that $U^{-1}U\subset V$.  Since $G$ is compact, there are $a,b\in A$ such that
\begin{equation}
\label{eq:lemma}
Ua\cap Ub\neq\emptyset.
\end{equation}
Otherwise $A$ would be an infinite closed discrete subset of a compact group $G$ - a contradiction.
Now, \eqref{eq:lemma} yields    $ab^{-1}\in U^{-1}U\subset V$.
\end{proof}

The following lemma is obvious.
\begin{lemma}\label{lemma:not:top:indep}
Let $A$ be a subset of  a topological group  $G$  and  $\mathcal{V}$  a fixed local base at $0_G$ of the topology of $G$. Assume that $W$ is a neighborhood of $0_G$ such that no element of $\mathcal{V}$ is a $W$-witness of the topological independence of $A$. Then $A$ is not topologically independent.
\end{lemma}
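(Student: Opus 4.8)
The plan is to argue by contraposition, and the whole point is a single monotonicity observation: the defining property of a $W$-witness passes to \emph{smaller} neighborhoods. More precisely, if $U$ is a $W$-witness and $V$ is any neighborhood of $0_G$ with $V\subseteq U$, then $V$ is again a $W$-witness. This is immediate from the direction of the implication in the definition of topological independence: the hypothesis ``$\sum_{a\in B}z_aa\in U$'' only becomes easier to satisfy as $U$ grows, so shrinking $U$ to $V$ strengthens the hypothesis while leaving the conclusion unchanged.

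Concretely, I would suppose toward a contradiction that $A$ \emph{is} topologically independent. Applying the definition of topological independence to the given neighborhood $W$ produces some $W$-witness $U$, that is, a neighborhood of $0_G$ with the property that $\sum_{a\in B}z_aa\in U$ forces $z_aa\in W$ for all $a\in B$, for every finite $B\subset A$ and every integer family $(z_a)_{a\in B}$. Since $\mathcal{V}$ is a local base at $0_G$, I would then choose $V\in\mathcal{V}$ with $V\subseteq U$. The decisive (and only) verification is that this basic neighborhood $V$ is itself a $W$-witness: whenever $\sum_{a\in B}z_aa\in V$, the inclusion $V\subseteq U$ yields $\sum_{a\in B}z_aa\in U$, and since $U$ is a $W$-witness we conclude $z_aa\in W$ for every $a\in B$. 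Thus $V\in\mathcal{V}$ is a $W$-witness, contradicting the hypothesis that no element of $\mathcal{V}$ is one; hence $A$ is not topologically independent.

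I do not expect any real obstacle here, which matches the author's remark that the lemma is obvious. The single subtlety worth flagging is the asymmetry in the definition of a witness: because the witness condition is of the form ``membership in $U$ implies a fixed conclusion,'' it is inherited by subsets of $U$ rather than by supersets. This is exactly what legitimizes replacing the abstract witness $U$ by a neighborhood $V$ drawn from the prescribed local base $\mathcal{V}$, and it is the reason a local base suffices to detect the failure of topological independence.
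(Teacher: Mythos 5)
Your proof is correct, and it is precisely the argument the paper leaves implicit when declaring the lemma obvious: a $W$-witness $U$ yields a basic $V\in\mathcal{V}$ with $V\subseteq U$, which inherits the witness property by the one-directional nature of the definition. Nothing is missing, and the key monotonicity observation you flag is exactly the right point.
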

Our next proposition plays the key role in the proof of Theorem \ref{thm:main}.
\begin{proposition}\label{prop:finitely:supported:characters}
Let $A$ be a topologically independent subset of a precompact group $G$. Then each character of $G^*$ is finitely $A$-supported. 
\end{proposition}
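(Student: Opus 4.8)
The plan is to argue by contradiction: assume some $\chi\in G^*$ has an infinite $A$-support $S=\mathrm{supp}_A(\chi)$, and deduce that $A$ fails to be topologically independent, contradicting the hypothesis. To detect this failure I would invoke Lemma \ref{lemma:not:top:indep}: fixing the local base $\mathcal{V}$ of all basic neighborhoods $\mathcal{U}(\psi_1,\ldots,\psi_n;\delta)$ supplied by Fact \ref{fact:precompact:top}, it suffices to exhibit a single neighborhood $W$ of $0_G$ admitting no $W$-witness in $\mathcal{V}$. The natural candidate is $W=\{g\in G:\chi(g)\in V_{\pi/2}\}$.

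The key device is a per-element amplification. For each $a\in S$ we have $\chi(a)\neq 1$, so the cyclic subgroup $\langle\chi(a)\rangle$ of $\S$ is nontrivial and therefore not contained in the proper arc $V_{\pi/2}$; hence there is an integer $m_a$ with $\chi(a)^{m_a}\notin V_{\pi/2}$, that is, $m_a a\notin W$. I would then use compactness to make a difference of two such amplified elements small: writing $G$ as a topological subgroup of a compact group $K$, each basic $U\in\mathcal{V}$ has the form $U=V\cap G$ for some open $V\ni 0$ in $K$, and Lemma \ref{lemma:basic} applied to the set $\{m_a a:a\in S\}\subseteq K$ produces distinct points $m_a a,\ m_b b$ with $m_a a-m_b b\in V$; since both lie in $G$, in fact $m_a a-m_b b\in V\cap G=U$.

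Taking $B=\{a,b\}$ with coefficients $z_a=m_a$ and $z_b=-m_b$ then yields $\sum_{x\in B}z_x x=m_a a-m_b b\in U$, while the term $z_a a=m_a a\notin W$ by construction. Thus $U$ violates the defining implication of a $W$-witness, and since $U\in\mathcal{V}$ was arbitrary, Lemma \ref{lemma:not:top:indep} gives that $A$ is not topologically independent, the desired contradiction. I would emphasize that the amplification constant $\pi/2$ is fixed once and for all, independently of $U$, so that the \emph{single} neighborhood $W$ defeats the entire base $\mathcal{V}$, which is exactly what Lemma \ref{lemma:not:top:indep} demands.

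The one genuinely delicate point, and the main obstacle I expect, is the hypothesis that $\{m_a a:a\in S\}$ be infinite, which Lemma \ref{lemma:basic} requires; the map $a\mapsto m_a a$ need not be injective. I would patch this by a pigeonhole alternative: if the image is finite, then some fiber is infinite, so there are distinct $a,b\in S$ with $m_a a=m_b b$, whence $m_a a-m_b b=0\in U$ trivially and the very same $B$ works. Hence the argument closes in both cases.
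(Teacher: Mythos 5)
Your proof is correct, and it follows the same skeleton as the paper's: the same defeating neighborhood $W=\mathcal{U}(\chi;\pi/2)$, the same amplification producing integers $m_a$ with $m_aa\notin W$ (your cyclic-subgroup justification for their existence is sound; the paper asserts this step without proof), the appeal to Lemma \ref{lemma:not:top:indep}, the key use of Lemma \ref{lemma:basic}, and the same pigeonhole patch when the amplified set $\{m_aa:a\in S\}$ fails to be infinite (the paper handles the analogous degeneracy by noting that coinciding character values give the conclusion trivially). Where you genuinely diverge is in \emph{which} compact group receives Lemma \ref{lemma:basic}. The paper stays intrinsic: given a candidate witness $\mathcal{U}(\chi_1,\ldots,\chi_n;\delta)$, it maps the amplified elements into $\S^n$ via $a\mapsto(\chi_1(z_aa),\ldots,\chi_n(z_aa))$ and applies Lemma \ref{lemma:basic} inside the compact group $\S^n$; for this it needs Fact \ref{fact:precompact:top} to know that such sets form a local base of $G$. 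You instead go extrinsic: you embed $G$ in a compact group $K$ (available by the paper's very definition of precompactness) and apply Lemma \ref{lemma:basic} to $\{m_aa\}$ directly in $K$, using only that neighborhoods of $0_G$ are traces of open subsets of $K$. A consequence you may not have noticed: your invocation of Fact \ref{fact:precompact:top} is superfluous, since \emph{every} open neighborhood of $0_G$ is of the form $V\cap G$ with $V$ open in $K$, so you could take $\mathcal{V}$ to be the family of all open neighborhoods of $0_G$. This makes your argument marginally more elementary and slightly more general in flavor (it needs only continuity of $\chi$ and the subgroup embedding, and never the character-theoretic description of the topology), whereas the paper's version has the virtue of working entirely within $G$ and its character group, never choosing an ambient compact group, which keeps it in harmony with the duality-flavored machinery of the rest of the section.
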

\begin{proof}
We will prove the contrapositive. In order to do so, let $supp_A(\chi)$ be infinite for some $\chi\in G^*$. Put $W=\mathcal{U}(\chi;\frac{\pi}{2})$ and pick $n\in\N$, $\chi_1,\ldots,\chi_n\in H$ and $\delta\in(0;\pi)$ arbitrarily. By Lemma \ref{lemma:not:top:indep}, it suffices to show that $\U{n}{\delta}$ is not a $W$-witness of the topological independence of $A$.

 For every $a\in supp_A(\chi)$ we have $\chi(a)\neq 1$. Therefore, we can find $z_a\in\Z$ such that $\chi(z_aa)=\chi(a)^{z_a}\not\in V_\frac{\pi}{2}$. Consequently,  
\begin{equation}\label{zaanotinW}
z_aa\not\in W \mbox{ for all } a\in supp_A(\chi).
\end{equation}
To finish the proof, it remains to find distinct $a,b\in supp_A(\chi)\subset A$ such that 
\begin{equation}\label{eq:neco}
z_aa-z_bb\in\U{n}{\delta}.
\end{equation}
Indeed, in this case $\U{n}{\delta}$ is not a $W$-witness of the topological independence of $A$ by \eqref{zaanotinW}.
If there are distinct $a,b\in supp_A(\chi)$ such that $\chi_i(z_aa)=\chi_i(z_bb)$ for all $i=1,\ldots,n$, then \eqref{eq:neco} holds. Otherwise the set $\{s_a:a\in supp_A(\chi)\}$, where $$s_a=(\chi_1(z_aa),\ldots,\chi_n(z_aa)),$$ is an infinite subset of the compact group $\S^n$.  The set $V=\underbrace{V_{{\delta}}\times\ldots\times V_{{\delta}}}_n$ is an open neighborhood of the identity element $(\underbrace{1,\ldots,1}_n)$ of $\S^n$. Thus, by Lemma \ref{lemma:basic}, there are distinct $a,b\in supp_A(\chi)$ such that $s_as_b^{-1}\in V$. Therefore, $$\chi_i(z_aa-z_bb)\in V_\delta \mbox{ for all } i=1,\ldots,n.$$ This yields \eqref{eq:neco} and finishes the proof.
\end{proof} 

We omit the straightforward proof of the next simple lemma.
\begin{lemma}\label{lemma:subbase:suffices}
Let $A$ be a subset of a topological group $G$ and $\mathcal{V}$ a local subbase at $0_G$ of the topology of $G$. Then $A$ is absolutely Cauchy summable if and only if   \eqref{eq:def:of:cauch:summable} holds for every $U\in\mathcal{V}$.
\end{lemma}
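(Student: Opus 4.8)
The plan is to verify the two implications separately, with essentially all of the (minimal) work concentrated in the nontrivial direction. The forward implication is immediate: every element of a local subbase $\mathcal{V}$ is in particular a neighborhood of $0_G$, so if $A$ is absolutely Cauchy summable, then by definition \eqref{eq:def:of:cauch:summable} holds for each $U\in\mathcal{V}$.

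For the reverse implication, I would assume that \eqref{eq:def:of:cauch:summable} holds for every $U\in\mathcal{V}$ and show that it then holds for an arbitrary neighborhood $W$ of $0_G$. First I would invoke the defining property of a local subbase to pick finitely many $U_1,\ldots,U_n\in\mathcal{V}$ with $U_1\cap\cdots\cap U_n\subset W$. Applying the hypothesis to each $U_i$ produces a finite set $F_i\subset A$ with $\hull{A\setminus F_i}\subset U_i$. I would then put $F=F_1\cup\cdots\cup F_n$, which is again finite.

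The only point requiring any attention is checking that this single $F$ serves as a witness for $W$. Since $F_i\subset F$ for each $i$, we have $A\setminus F\subset A\setminus F_i$, and hence, by monotonicity of subgroup generation, $\hull{A\setminus F}\subset\hull{A\setminus F_i}\subset U_i$ for every $i=1,\ldots,n$. Intersecting over $i$ gives $\hull{A\setminus F}\subset U_1\cap\cdots\cap U_n\subset W$, which is precisely \eqref{eq:def:of:cauch:summable} for $W$. There is no genuine obstacle here: the argument rests solely on the elementary facts that a finite union of finite sets is finite and that $\hull{\cdot}$ is monotone with respect to inclusion, which is exactly why the lemma deserves to be called straightforward.
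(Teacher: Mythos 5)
Your proof is correct: the forward direction is trivial, and the reverse direction is exactly the standard argument (take the union of the finite witnesses for a finite subfamily of subbase elements whose intersection lies inside the given neighborhood, and use monotonicity of $\hull{\cdot}$). The paper itself omits the proof as ``straightforward,'' and what you wrote is precisely the intended argument, so there is nothing to compare beyond noting full agreement.
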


\begin{proposition}\label{proposition:top:ind:is:acs}
Let $A$ be a topologically independent subset of a precompact group $G$. Then $A$ is absolutely Cauchy summable.
\end{proposition}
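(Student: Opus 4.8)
The plan is to verify absolute Cauchy summability directly on a convenient local subbase, with Proposition \ref{prop:finitely:supported:characters} supplying the finite set demanded by \eqref{eq:def:of:cauch:summable}. Since $G$ is precompact, Fact \ref{fact:precompact:top} tells us its topology is $T_{G^*}$, and the sets $\mathcal{U}(\chi;\delta)$ with $\chi\in G^*$ and $\delta\in(0;\pi)$ form a local subbase at $0_G$, the basic neighborhoods $\U{n}{\delta}$ being exactly their finite intersections $\bigcap_{i=1}^n\mathcal{U}(\chi_i;\delta)$. Hence, by Lemma \ref{lemma:subbase:suffices}, it is enough to confirm \eqref{eq:def:of:cauch:summable} for a single subbasic neighborhood $U=\mathcal{U}(\chi;\delta)$.

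First I would fix such a $U$ and invoke Proposition \ref{prop:finitely:supported:characters}: topological independence of $A$ forces $\chi$ to be finitely $A$-supported, so $F=supp_A(\chi)$ is a finite subset of $A$. This $F$ is the candidate witness for \eqref{eq:def:of:cauch:summable}, and it remains to check that $\hull{A\setminus F}\subset U$.

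The verification is immediate: every $a\in A\setminus F$ lies outside $supp_A(\chi)$, i.e. $\chi(a)=1$, so $\chi$ vanishes on all generators of $\hull{A\setminus F}$ and therefore on the whole subgroup. Consequently $\chi(g)=1\in V_\delta$ for each $g\in\hull{A\setminus F}$, which places $g$ in $\mathcal{U}(\chi;\delta)=U$. With $\hull{A\setminus F}\subset U$ established for every subbasic $U$, Lemma \ref{lemma:subbase:suffices} delivers the conclusion.

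I do not expect a genuine obstacle here: the real work has already been done in Proposition \ref{prop:finitely:supported:characters}, after which the statement reduces to the elementary fact that a character killing each generator kills the generated subgroup. The only points needing attention are the reduction to the subbase (so that a single character governs each relevant neighborhood) and noting that $1\in V_\delta$ for every admissible $\delta$, so that the image $\chi(g)=1$ automatically lies in the prescribed neighborhood $V_\delta$ of the identity of $\S$.
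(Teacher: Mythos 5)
Your proof is correct and takes essentially the same route as the paper's: reduce to the subbasic neighborhoods $U=\mathcal{U}(\chi;\delta)$ via Fact \ref{fact:precompact:top} and Lemma \ref{lemma:subbase:suffices}, then take $F=supp_A(\chi)$, finite by Proposition \ref{prop:finitely:supported:characters}. The only difference is that you spell out the final verification that $\chi$ vanishes on $\hull{A\setminus F}$, hence $\hull{A\setminus F}\subset U$, a step the paper compresses into a single ``observe.''
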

\begin{proof}
By Fact \ref{fact:precompact:top}, the set  $\{\mathcal{U}(\chi;\delta):\chi\in G^*,\delta\in(0;\pi)\}$ is a local subbase at $0_G$ of the topology of $G$. Fix arbitrary $\delta\in(0;\pi)$, $\chi\in G^*$, and put $U=\mathcal{U}(\chi;\delta)$. By Lemma \ref{lemma:subbase:suffices}, it suffices to show \eqref{eq:def:of:cauch:summable}. Put $F=supp_A(\chi)$. Then $F$ is finite by Proposition \ref{prop:finitely:supported:characters}. Observe that \eqref{eq:def:of:cauch:summable} is satisfied.
\end{proof}

Now we are in position to prove Theorem \ref{thm:main}: 

\begin{proof}[\bf Proof of Theorem \ref{thm:main}]
The proof follows immediately from Proposition \ref{proposition:top:ind:is:acs} and Fact \ref{fact:basic}.
\end{proof}

\section{Examples and remarks}
\begin{example}\label{ex:loc:comp:does:not:work}
{\em Let  $A$ be an arbitrary infinite independent subset of a discrete (in particular, locally compact) group $G$. Then $A$ is topologically independent but the topological subgroup  $\hull{A}$ is never a Tychonoff direct sum of  cyclic topological subgroups $\hull{a}$, $a\in A$.} Indeed, as was noted in the Introduction, independent sets coincide with topologically independent sets in discrete groups. On the other hand a  Tychonoff direct sum of infinitely many non-trivial topological groups is always non-discrete while every topological subgroup of a discrete group is discrete. 
\end{example}

\begin{remark}
Recall that a topological group is {\em compactly generated} if it contains a compact subset $A$ such that $\hull{A}$ is the whole group. Let us note that the discrete group $G$ from Example \ref{ex:loc:comp:does:not:work} is not compactly generated  as it contains infinite independent subset and consequently each its generating set is infinite and discrete (thus non-compact). We do not know whether there is an infinite topologically independent subset $A$ of a  locally compact compactly generated group $G$ such that $\hull{A}$ is not a Tychonoff direct sum of the cyclic groups $\hull{a}$ $a\in A$. In other words, we do not know whether the word ``precompact'' can be replaced by ``locally compact compactly generated'' in Theorem \ref{thm:main}.
\end{remark}

Our next example provides a negative answer to Question \ref{Q1}.

\begin{example}\label{ex:open:iso:not:enough}
{\em There exists a subset $A$ of a precompact group such that the Kalton map $\kal{A}:S_A\to\hull{A}$ is an open isomorphism, but $A$ is not topologically independent.} To show this, let  $A$ be an arbitrary infinite topologically independent subset of a precompact group $G$ and let $\tau$ denote the topology of $G$. Since subgroups of precompact groups are precompact, we may and will assume that $G=\hull{A}$. By Theorem \ref{thm:main}, 

\begin{equation}\label{kal:je:iso}
\kal{A}:S_A\to (G,\tau) \mbox{ is an isomorphism and a homeomorphism. }
\end{equation}
By Fact \ref{fact:precompact:top}, characters of $(G,\tau)^*$ separate points of $G$. Therefore, for each $a\in A$ we may pick $\chi_a\in (G,\tau)^*$ such that $\chi_a(a)\neq 1$. The set $A$ is independent as it is topologically independent in $(G,\tau)$. Hence there is (unique) $\chi\in G'$ such that 
\begin{equation}\label{eq:chia:chi}
\mbox{
$\chi(a)=\chi_a(a)\neq 1$ for all $a\in A$.}
\end{equation}  Consider the subgroup $H$ of $G'$ generated by $(G,\tau)^*\cup\{\chi\}$. Then $A$ as a subset of $(G,T_H)$ is as required.  

Indeed, the topology $T_H$ is precompact by Fact \ref{fact:precompact:top} (it separates points as $(G,\tau)^*$ does). Further, since $A$ is infinite, the character $\chi$ is  not finitely $A$-supported by \eqref{eq:chia:chi}. Thus the set $A$ is not topologically independent in $(G,T_H)$  by Proposition \ref{prop:finitely:supported:characters}. 

Finally, it follows from  \eqref{eq:chia:chi} that each $\hull{a}$ has the same subgroup topology in the topology $T_H$ as in $\tau$. Therefore, $S_A$ as the Tychonoff direct sum has the same topology with respect to $T_H$ as well as with respect to $\tau$. Since $T_H$ is finer then $\tau$, it follows from \eqref{kal:je:iso}  that  the Kalton map $$\kal{A}:S_A\to (G,T_H)$$ is an open isomorphism.   
\end{example}

Call a group $G$ monothetic if it has a dense cyclic subgroup $C$. Every generator of $C$ is called a {\em topological generator} of $G$. 

\begin{lemma}\label{dense:means:not:top:indep}
 If  $g$  is a topological generator of a topological group $G$, then the singleton $\{g\}$ is a maximal (with respect to inclusion) topologically independent subset of $G$. 
\end{lemma}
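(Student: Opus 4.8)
The plan is to verify the two requirements of maximality separately: that $\{g\}$ is itself topologically independent, and that it admits no proper topologically independent extension. The first is immediate. Since $g$ is a topological generator, $\hull{g}$ is dense in $G$; as $G$ is Hausdorff, density forces $g\neq 0_G$ (the trivial group being a vacuous case), so $\{g\}$ is a singleton of a nonzero element. Any such singleton is topologically independent: for a given neighbourhood $W$ of $0_G$ one takes the $W$-witness $U=W$, and the defining implication holds trivially because $\{g\}$ is the only nonempty finite subset of $\{g\}$.

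For maximality I would first record the routine fact that every subset of a topologically independent set is again topologically independent, the same $W$-witness serving verbatim for the subset. Hence, were some topologically independent $A$ to contain $\{g\}$ properly, it would contain a nonzero element $h\neq g$, and the two-element set $\{g,h\}\subseteq A$ would itself be topologically independent. It therefore suffices to show that $\{g,h\}$ is \emph{not} topologically independent for any nonzero $h\neq g$.

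The decisive step exploits the density of $\hull{g}$. As $G$ is Hausdorff and $h\neq 0_G$, fix a neighbourhood $W$ of $0_G$ with $h\notin W$. I claim that no neighbourhood of $0_G$ is a $W$-witness of the topological independence of $\{g,h\}$, which by Lemma~\ref{lemma:not:top:indep} (applied to any local base at $0_G$) yields the assertion. Indeed, let $U$ be an arbitrary neighbourhood of $0_G$. The translate $-h+U$ is a neighbourhood of $-h$, so by density of $\hull{g}$ it contains some integer multiple $ng$ of $g$; equivalently $ng+h\in U$. Taking $B=\{g,h\}$ with coefficients $z_g=n$ and $z_h=1$, we obtain $z_gg+z_hh=ng+h\in U$ while $z_hh=h\notin W$, so the defining implication fails and $U$ is not a $W$-witness.

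The only point demanding genuine care is this last step, yet it is conceptually transparent and presents no real obstacle: density lets us approximate $h$ by a multiple of $g$, producing a relation $ng+h\in U$ in which the coefficient of $h$ equals $1$, so that the offending summand is $h$ itself, pinned a fixed distance $W$ away from $0_G$. The argument is uniform in $h$; in the subcase $h\in\hull{g}$ one can even realise $ng+h=0_G$ exactly, but this is not needed.
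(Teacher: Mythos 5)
Your proof is correct, but it takes a genuinely different route from the paper. The paper argues by contrapositive through its main machinery: if $\{g,h\}$ were topologically independent, then, since finite sets are absolutely Cauchy summable, Fact~\ref{fact:basic} would make $\hull{\{g,h\}}$ the Tychonoff direct sum $\hull{g}\bigoplus\hull{h}$, in which the factor $\hull{g}$ cannot be dense (the continuous projection onto the Hausdorff factor $\hull{h}$ kills its closure), contradicting the density of $\hull{g}$ in $G$. You instead work directly from the definition: density of $\hull{g}$ lets you place $ng+h$ into any prescribed neighbourhood $U$ of $0_G$, while the summand $z_h h=h$ stays outside a fixed $W$ with $h\notin W$, so no $U$ can be a $W$-witness. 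Your argument is more elementary and self-contained --- it never invokes Fact~\ref{fact:basic}, which is the deep result of \cite{DSS}, and it even exhibits the violating relation explicitly with coefficient $1$ on $h$. It also attends to two small points the paper leaves implicit: that the singleton $\{g\}$ is itself topologically independent (needed for maximality), and that topological independence is inherited by subsets (needed to reduce maximality to two-element sets). What the paper's approach buys in exchange is brevity and a conceptual explanation: once the direct-sum characterization is available, the incompatibility of ``topological generator'' with ``member of a larger topologically independent set'' is an immediate structural consequence. Both proofs are valid; yours would stand even in a paper that did not have Fact~\ref{fact:basic} at its disposal.
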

\begin{proof}
Let $g,h$ be topologically independent elements of $G$. Since finite sets are absolutely Cauchy summable, Fact \ref{fact:basic} gives us that $\hull{\{g,h\}}$ is the Tychonoff direct sum $\hull{g}\bigoplus\hull{h}$. Hence $\hull{g}$ is not dense in $\hull{\{g,h\}}$ and consequently it is not dense in $G$ as well. Thus $g$ is not a topological generator of $G$.
\end{proof}

Our last and simple example provides a  warning showing that unlike in Theorem \ref{thm:main}, some other properties that hold for independent sets may not be transfered to the realm of topologically independent sets in precompact groups. 

\begin{example}
{\em  Obviously, if $a,b, c$ are distinct non-torsion elements of a group such that the set $\{a,c\}$ as well as the set $\{b,c\}$ is not independent, then also the set $\{a,b\}$ is not independent. On the other hand, there exist a compact topological group $G$ and non-torsion elements $a,b,c\in G$ such that the set $\{a,c\}$ as well as the set $\{b,c\}$ is not topologically independent while the set $\{a,b\}$ is topologically independent.}

Put $G=\S^2$, and $a=[g,1], b=[1,g]$, where $g\in\S$ is non-torsion. The set $\{a,b\}$ is topologically independent by Fact \ref{fact:basic}. It is a folklore fact, that $\S^2$ is monothetic. Let  $c\in\S^2$  be its topological generator.  Then $c$ is non-torsion and the set $\{a,c\}$ as well as the set $\{b,c\}$ is not topologically independent by Lemma \ref{dense:means:not:top:indep}.

\end{example}

\end{document}